\theoremstyle{remark}
\newtheorem{theorem}{Theorem}
\newtheorem{lemma}{Lemma}
\newtheorem{corollary}{Corollary}
\newtheorem{prop}{Proposition}
\numberwithin{equation}{section}
\begin{document}

\title[Intersective Polynomials]{Polynomials with factors of the form $ (x^{q} - a)$ with roots modulo every integer}


\author[B. Mishra]{Bhawesh Mishra}
\address{Department of Mathematical Sciences, The University of Memphis, Memphis, TN 38152}
\email{\url{bmishra1@memphis.edu}}
\thanks{}

\subjclass[2020]{Primary 11A07; 11C08 Secondary 11B10;  }

\date{\today}

\dedicatory{}

\commby{}
\maketitle
\begin{abstract}
Given an odd prime $q$, a natural number $l$ and non-zero $q$-free integers $a_{1}, a_{2}, \ldots, a_{l}$, none of which are equal to $1$ or $-1$, we give necessary and sufficient conditions for the polynomial $\prod_{j=1}^{l} (x^{q} - a_{j})$ to have roots modulo every positive integer. Consequently: (i) if $l \leq q$ and none of $a_{1}, a_{2}, \ldots, a_{l}$ is a perfect $q^{th}$ power, then the polynomial $\prod_{j=1}^{l} (x^{q} - a_{j})$ fails to have roots modulo some positive integer; $(ii)$ For every $l\in\mathbb{N}$, and every $(c_{j})_{j=1}^{l}\in\big(\mathbb{F}_{q}\setminus\{0\}\big)^{l}$, the polynomial $\prod_{j=1}^{l} (x^{q} - a_{j})$ has roots modulo every positive integer if and only if $\prod_{j=1}^{l} (x^{q} - \text{rad}_{q}\big(a_{j}^{c_{j}}\big)))$ has roots modulo every positive integer. Here $\text{rad}_{q}(a_{j})$ denotes the $q$-free part of the integer $a_{j}$.
\end{abstract}

\section{Introduction}
\subsection{Motivation.} We are interested in polynomials $f(x) \in\mathbb{Z}[x]$ such that \begin{equation}
    f(x) \equiv 0 \hspace{1mm} (\text{mod } m) \text{ is solvable for every positive integer } m. \label{defn1}
\end{equation} 
A polynomial with integer root clearly has root modulo every positive integer. The interesting case is when a polynomial $f(x)\in\mathbb{Z}[x]$ satisfies \eqref{defn1} but does not have a linear factor. Polynomials that have roots modulo every positive integer are of interest in additive combinatorics.

Recall that the \textit{upper density} of a subset $T$ of integers is defined as \[\Bar{d}(T) = \limsup_{N\rightarrow\infty} \frac{|T \cap \{-N, \ldots, -1, 0, 1, \ldots, N\}|}{2N+1}.\]
Then, a set $S \subseteq \mathbb{Z}$ is said to be an \textit{intersective set} if given any $T \subseteq \mathbb{Z}$ with $\Bar{d}(T) > 0$, one has $ S \cap (T - T) \not\subseteq \{0\}$. In other words, an intersective set is called so because it intersects non-trivially with the set of differences \[T - T := \{t_{1} - t_{2} : t_{1}, t_{2} \in T\},\] of every positive upper density subset $T$ of integers. A polynomial $f(x) \in\mathbb{Z}[x]$ is said to be an \textit{intersective polynomial} if the set $\{f(x) : x\in\mathbb{Z}\}$ of its values is an intersective set.

S\'ark\"ozy and Furstenberg, independently proved in \cite{Sa1} and \cite{Fu} respectively, that the polynomial $f(x) = x^{2}$ is intersective. In other words, they showed that for every $S \subset\mathbb{Z}$ with $\Bar{d}(S) > 0$, $(S-S)$ contains a perfect square. The methods in \cite{Sa1} can be modified to show that for any $i \in\mathbb{N}$, the polynomial $f(x) = x^{i}$ is intersective \cite{Sa2}. Kamae and Mend\'es-France, in \cite{KaMF}, first proved that a polynomial $f(x) \in\mathbb{Z}[x]$ is intersective if and only if $f$ satisfies \eqref{defn1}. In other words, if a polynomial satisfies \eqref{defn1}, then for every $S \subseteq \mathbb{Z}$ with $\Bar{d}(S) > 0$, we have $\{a, a+f(n) \} \subseteq S$ for some $a \in S$ and $n \in\mathbb{Z}$. Bergelson, Leibman and Lesigne obtained a far reaching extension of this result in \cite{BLL}, a special instance of which implies that for a polynomial $f(x) \in\mathbb{Z}[x]$, every subset of positive upper density in $\mathbb{Z}$ contains arbitrarily long polynomial progressions of the form $$a, a+f(n), a+2f(n), \ldots, a+kf(n)$$ if and only if $f(x)$ satisfies \ref{defn1}. 

It is well known that if $p$ and $q$ are distinct odd primes then the polynomial \[p(x) = (x^{2} - p) (x^{2} - q) (x^{2} - pq)\] satisfies \ref{defn1} if and only if $p$ is a square modulo $q$ and $q$ is a square modulo $p$ (see \cite[pp 139--140]{Jones} or \cite[Corollary 1]{MishraAMM}). This example was generalized Hyde and Spearman in \cite{HS} in the following manner. Let $c$ and $d$ be square-free integers not equal to $1$, let $c_{1} = \frac{c}{\text{gcd}(c,d)}$ and $d_{1} = \frac{d}{\text{gcd}(c,d)}$. Hyde and Spearman obtained necessary and sufficient conditions for the polynomial 
\begin{equation}
    q(x) = (x^{2} - c) (x^{2} - d) (x^{2} - c_{1}d_{1}) \label{intro1}
\end{equation}
to have a root modulo every integer but have no rational root \cite{HS}. In \cite{MishraAMM}, the author generalized Hyde and Spearman's result by obtaining necessary and sufficient conditions for the polynomial \[g(x) = (x^{2} - a_{1}) (x^{2} - a_{2}) \cdots (x^{2} - a_{l})\] to have roots modulo every positive integer. Here, $a_{1}, a_{2}, \ldots, a_{l}$ are distinct, non-zero square-free integers such that none of them are equal to $1$. Clearly, $g(x)$ cannot have a rational root since each $a_{i}$ is square-free and not equal to $1$. This article is about intersective polynomials that are of the form \[(x^{q}-a_{1})(x^{q}-a_{2})\ldots (x^{q}-a_{l}),\] for an odd prime $q$ and $q$-free integers $a_{1}, a_{2}, \ldots, a_{l}$, none of which are $1$ or $-1$ (see Theorem \ref{mainresult}). We will obtain necessary and sufficient conditions for such polynomials to be intersective.  

\subsection{Notation}
Throughout this article, $q$ will be used to denote an odd prime. An integer $a$ is called $q$-free if for every prime $p$, $p^{q} \nmid a$. We will denote the finite field with $q$ elements by $\mathbb{F}_{q}$. Given a prime $p$, an integer $\nu\geq 1$ and $a \in\mathbb{Z}$, we denote by $p^{\nu} \mid\mid a$ the fact that $p^{\nu}$ is the highest power of $p$ dividing $a$. The set of primes is denoted by $\mathbb{P}$. Given $S \subset\mathbb{P}$, we define its relative asymptotic density to be \[d_{\mathbb{P}}(S) := \lim_{n\rightarrow\infty} \frac{|S \cap \{1, 2, \ldots, n\}|}{|\mathbb{P} \cap \{1, 2, \ldots, n\}|},\] if the limit exists. We will say that $A\subseteq\mathbb{Z}$ contains a $q^{th}$ power residue modulo almost every prime if the set 
\[ \big\{p\in\mathbb{P} : A \text{ contains a } q^{th} \text{ power residue modulo } p \big\} \] has relative asymptotic density $1$. Recall that if a number field $K$ contains the complex $q^{th}$ root of unity $\zeta_{q}$, then for every prime ideal $\mathfrak{p}$ of $K$ coprime to $q$ and every $\mathfrak{p}$-adic unit $\alpha \in K$, we define the $q^{th}$ power residue symbol $\big(\alpha|\mathfrak{p}\big)_{q}$ to be the unique $q^{th}$ root of unity $\zeta_{q}^{j}$ such that 
\[\alpha^{\frac{\text{Norm}(\mathfrak{p})-1}{q}} \equiv \zeta_{q}^{j} \hspace{1mm} (\text{mod } \mathfrak{p}).\] Whenever $\big(\alpha|\mathfrak{p}\big)_{q}$ is defined and $\big(\alpha|\mathfrak{p}\big)_{q} \neq + 1$, $\alpha$ is not a $q^{th}$ power modulo $\mathfrak{p}$. 

\subsection{Adjustment within Positive $q$-free Class.}
Given an integer $b$ that is not a perfect $q^{th}$ power and has the unique prime factorization (with sign) $\pm \prod_{i=1}^{m} p_{i}^{a_{i}}$, the $q$-free part of $b$ can be defined as \[ \text{rad}_{q}(b) := \pm \prod_{i=1}^{m} p_{i}^{a_{i} (\text{mod }q)}.\] By convention, $-1$ and $1$ are $q$-free under this convention. An integer $b$ is a $q^{th}$ power modulo a prime $p$ not dividing $b$ if and only if $|b|$ is a $q^{th}$ power modulo $p$. This is because $-1$ is always a $q^{th}$ power. Therefore, given a set $A = \{a_{j}\}_{j=1}^{l} \subset\mathbb{Z}\setminus\{0\}$, as long as we are concerned with $q^{th}$ power residues modulo primes $p \nmid \prod_{j=1}^{l} a_{j}$ in $A$, one could replace the set $A$ by \[\big\{ \text{rad}_{q}(|a_{j}|) \big\}_{j=1}^{l}.\] For instance, when $q = 3$ one can replace the set $\{24, -104, 54\}$ by $\{3, 13, 2\}$ because $24 = 2^{3} \cdot 3$, $104 = 2^{3} \cdot 13$, and $54 = 3^{3} \cdot 2$. We will use this fact in the formulation of the Proposition \ref{Mainresult2} of the article. 
    
\subsection{Linear Coverings of a Vector Space.} \label{covering}
Let $V$ be a vector space of dimension $k \geq 2$ over a field $K$. A linear covering of $V$ is a collection of proper subspaces $\{W_{i}\}_{i\in I}$ such that $V = \bigcup_{i \in I} W_{i}$. The linear covering number of a vector space $V$, denoted by $\#$ LC$(V)$, is the minimum cardinality of a linear covering of $V$. We will use the following fact about $\#$ LC$(V)$, which is the part of the main result proved in \cite{Clark}.
\begin{prop}
For every vector space $V$ over $\mathbb{F}_{q}$, of dimension $\geq 2$, \#LC(V) = q + 1. \label{ILC}
\end{prop}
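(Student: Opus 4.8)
\subsection*{Proof proposal for Proposition \ref{ILC}}

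The plan is to establish the two inequalities $\#\mathrm{LC}(V) \leq q+1$ and $\#\mathrm{LC}(V) \geq q+1$ separately. Write $n = \dim_{\mathbb{F}_q} V \geq 2$, so that $|V| = q^n$ and every proper subspace has cardinality at most $q^{n-1}$.

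For the upper bound I would first treat the model case $V = \mathbb{F}_q^2$. Here the one-dimensional subspaces (lines through the origin) are parametrized by the projective line $\mathbb{P}^1(\mathbb{F}_q)$, of which there are exactly $\frac{q^2-1}{q-1} = q+1$. Since every nonzero vector lies on precisely one such line and the zero vector lies on all of them, these $q+1$ proper subspaces cover $\mathbb{F}_q^2$. For general $n \geq 2$, I would fix a linear surjection $\pi : V \to \mathbb{F}_q^2$ (for instance, projection onto two coordinates in a chosen basis) and pull back: if $L_1, \ldots, L_{q+1}$ are the $q+1$ lines covering $\mathbb{F}_q^2$, then each $\pi^{-1}(L_i)$ is a subspace of dimension $n-1$ by rank--nullity, hence proper, and $\bigcup_{i=1}^{q+1} \pi^{-1}(L_i) = \pi^{-1}\left(\bigcup_i L_i\right) = \pi^{-1}(\mathbb{F}_q^2) = V$. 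This produces a linear covering of size $q+1$, giving $\#\mathrm{LC}(V) \leq q+1$.

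For the lower bound, suppose $V = \bigcup_{i=1}^{r} W_i$ is any covering by proper subspaces. Since each $W_i$ contains $0$, removing it yields $V \setminus \{0\} = \bigcup_{i=1}^r (W_i \setminus \{0\})$, and the union bound gives $q^n - 1 = |V \setminus \{0\}| \leq \sum_{i=1}^r |W_i \setminus \{0\}| \leq r\,(q^{n-1} - 1)$. Rearranging, $r \geq \frac{q^n - 1}{q^{n-1} - 1} = q + \frac{q-1}{q^{n-1}-1}$. For every $n \geq 2$ the fractional term is strictly positive, so $r > q$, and since $r$ is an integer we conclude $r \geq q+1$ (when $n = 2$ the bound is already the exact equality $r \geq q+1$). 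Taking the minimum over all coverings gives $\#\mathrm{LC}(V) \geq q+1$, which combined with the upper bound yields $\#\mathrm{LC}(V) = q+1$.

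The step I expect to require the most care is the lower bound, specifically the passage from the naive count to the sharp one. Applying the union bound to all of $V$ gives only $q^n \leq r\,q^{n-1}$, i.e.\ $r \geq q$, which is off by one and therefore insufficient. The decisive observation is that all proper subspaces in a linear covering necessarily share the zero vector, so discarding $0$ from every subspace before counting tightens the inequality to $q^n - 1 \leq r\,(q^{n-1}-1)$; it is precisely this ``$-1$'' that upgrades $r \geq q$ to $r \geq q+1$. Everything else is elementary, and the argument in fact works verbatim with $q$ replaced by any prime power.
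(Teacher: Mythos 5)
Your proof is correct, but it is worth noting that the paper itself does not prove this proposition at all: it is imported wholesale as a known result, cited to Clark's \emph{Monthly} article \cite{Clark}, where it is established in greater generality (arbitrary fields, with the infinite-field case giving no finite covering). So what you have produced is a self-contained elementary substitute for that citation, and it is the standard argument: the upper bound via the $q+1$ lines of $\mathbb{P}^1(\mathbb{F}_q)$ pulled back along a surjection $V \to \mathbb{F}_q^2$, and the lower bound via the union bound sharpened by deleting the common zero vector, which is exactly the ``$-1$'' that turns $r \geq q$ into $r \geq q+1$. Both halves check out: $\dim \pi^{-1}(L_i) = n-1$ by rank--nullity, and $(q^n-1)/(q^{n-1}-1) = q + (q-1)/(q^{n-1}-1) > q$. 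One pedantic caveat: as stated, the proposition quantifies over \emph{every} vector space of dimension $\geq 2$, including infinite-dimensional ones, and your counting argument presupposes $|V| = q^n$ finite; this is harmless both because the paper only ever applies the result to $V = \mathbb{F}_q^k$ and because the general case reduces to the finite-dimensional one (given a covering $V = \bigcup_{i=1}^r W_i$, pick $v_i \notin W_i$ and intersect the covering with $U = \operatorname{span}(v_1, \ldots, v_r)$, which each $W_i \cap U$ fails to contain), but if you want your statement to match the paper's verbatim you should add that one-line reduction.
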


\subsection{Finite subsets of $q$-free integers and hyperplanes in a vector space.}\label{hyperplanes}
Let $A = \{a_{j}\}_{j=1}^{l} \subset\mathbb{Z}\setminus\{0,1\}$ be a finite set of $q$-free integers. Let $p_{1}, p_{2}, \ldots, p_{k}$ be all the primes that divide $\prod_{j=1}^{l} a_{j}$ and let $\nu_{ij} \geq 0$ such that $p_{i}^{\nu_{ij}} \mid\mid \text{rad}_{q}(|a_{j}|)$ for every $1 \leq i \leq k$ and every $1 \leq j \leq l$. Consider the equations of hyperplanes in $\mathbb{F}_{q}^{k}$ defined as \[\sum_{i=1}^{k} \nu_{ij} x_{i} = 0 \text{ for } 1 \leq j \leq l,\] in the variables $x_{1}, x_{2}, \ldots, x_{k}$. We call the set \[\Bigg\{ \Big\{ \big(x_{i}\big)_{i=1}^{k} \in\mathbb{F}_{q}^{k} : \sum_{i=1}^{k} \nu_{ij} x_{i} = 0 \Big\} : 1 \leq j \leq l \Bigg\},\] the set of hyperplanes in $\mathbb{F}_{q}^{k}$ associated with $A = \{a_{1}, a_{2}, \ldots, a_{l}\}$. Our main result is the following theorem. 
\begin{theorem}
Let $q$ be an odd prime and let $\{a_{j}\}_{j=1}^{l} \subseteq \mathbb{Z}\setminus\{-1,1\}$ be a finite set of non-zero $q$-free integers. Let $p_{1}, p_{2}, \ldots, p_{k} (k \geq 2)$ be all the primes dividing $\prod_{j=1}^{l} a_{j}$ and let $\nu_{ij} \geq 0$ such that $p_{i}^{\nu_{ij}} \mid\mid a_{j}$ for every $1 \leq i \leq k$ and $1 \leq j \leq l$. Then, the polynomial \[f(x) = (x^{q}-a_{1}) (x^{q} - a_{2}) \cdots (x^{q} - a_{l})\] is intersective if and only if following conditions are satisfied:
\begin{enumerate}
    \item The set of hyperplanes \[\Bigg\{ \Big\{ \big(x_{i}\big)_{i=1}^{k} \in\mathbb{F}_{q}^{k} : \sum_{i=1}^{k} \nu_{ij} x_{i} = 0 \Big\} : 1 \leq j \leq l \Bigg\}\] associated with the set $\{a_{1}, a_{2}, \ldots, a_{l}\}$ forms a linear covering of $\mathbb{F}_{q}^{k}$. \vspace{1mm}
    
    \item One of the $a_{i}$ is a $q^{th}$ power modulo $q^{q}$.\vspace{1mm}
    
    \item For every $ 1 \leq i \leq k$, there exists $j \in\{1, 2, \ldots, l\}$ such that $p_{i} \nmid a_{j}$ and $a_{j}$ is a $q^{th}$ power modulo $p_{i}$. 
\end{enumerate}
\label{mainresult}
\end{theorem}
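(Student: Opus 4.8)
\emph{Proof proposal.} The plan is to first invoke the criterion of Kamae and Mend\'es-France to replace \emph{intersective} by the requirement that $f\equiv 0$ be solvable modulo every positive integer, and then use the Chinese Remainder Theorem to reduce this to solvability modulo every prime power $p^{n}$. For a fixed prime $p$, the nonempty finite sets of roots of $f$ modulo $p^{n}$ form an inverse system, so by compactness $f$ has a root modulo $p^{n}$ for all $n$ if and only if $f$ has a root in $\mathbb{Z}_{p}$; since $\mathbb{Z}_{p}$ is an integral domain, this occurs if and only if some $a_{j}$ is a $q^{th}$ power in $\mathbb{Z}_{p}$. Hence $f$ is intersective if and only if, for every prime $p$, at least one $a_{j}$ is a $q^{th}$ power in $\mathbb{Z}_{p}$, and I would match the three conditions to the three classes of primes: those dividing neither $q$ nor $\prod_{j}a_{j}$, the prime $q$ itself, and the primes $p_{1},\dots,p_{k}$.

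For the first and decisive class, when $p\neq q$ and $p\nmid a_{j}$ the derivative $qx^{q-1}$ is a $p$-adic unit at any root, so Hensel's lemma lifts a $q^{th}$ power residue modulo $p$ to $\mathbb{Z}_{p}$; thus the question is whether some $a_{j}$ is a $q^{th}$ power modulo $p$. If $p\not\equiv 1\pmod q$ then $x\mapsto x^{q}$ is a bijection of $(\mathbb{Z}/p)^{\times}$ and every $a_{j}$ qualifies, so the content is concentrated at primes $p\equiv 1\pmod q$. Fixing a prime $\mathfrak{p}$ above such a $p$ in $\mathbb{Q}(\zeta_{q})$, the $q^{th}$ power residue symbol sends each $p_{i}$ to some $y_{i}\in\mathbb{F}_{q}$; since $a_{j}=\pm\prod_{i}p_{i}^{\nu_{ij}}$ and $-1$ is always a $q^{th}$ power, $a_{j}$ is a $q^{th}$ power modulo $p$ exactly when $\sum_{i}\nu_{ij}y_{i}=0$, i.e.\ when $(y_{i})_{i=1}^{k}$ lies on the hyperplane associated with $a_{j}$ (a genuine hyperplane because $a_{j}\neq\pm 1$ forces the form to be nonzero). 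Applying the Chebotarev density theorem to the Kummer extension $\mathbb{Q}(\zeta_{q},p_{1}^{1/q},\dots,p_{k}^{1/q})/\mathbb{Q}(\zeta_{q})$, whose Galois group is $(\mathbb{Z}/q)^{k}$ since distinct primes are multiplicatively independent modulo $q^{th}$ powers, shows that every vector $(y_{i})\in\mathbb{F}_{q}^{k}$ is the symbol of a positive density of primes. Consequently $f$ has a root in $\mathbb{Z}_{p}$ for every prime in this class precisely when the associated hyperplanes cover $\mathbb{F}_{q}^{k}$, which is condition (1); if they fail to cover, an uncovered vector yields, by Chebotarev, infinitely many primes modulo which no $a_{j}$ is a $q^{th}$ power. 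I expect this equidistribution step and the translation through the residue symbol to be the main obstacle.

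For the prime $q$ I would establish a lifting lemma: if $q\nmid a$ and $a$ is a $q^{th}$ power modulo $q^{2}$, then it is a $q^{th}$ power in $\mathbb{Z}_{q}$ (perturbing a root $x_{0}$ by $q^{n-1}t$ and solving the linear congruence $s+x_{0}^{q-1}t\equiv 0\pmod q$, the binomial error terms having $q$-adic valuation $\geq n+1$ for $n\geq 2$). Moreover, if $q\mid a_{j}$ then $1\leq v_{q}(a_{j})\leq q-1$ is not divisible by $q$, so a valuation comparison shows $x^{q}\equiv a_{j}\pmod{q^{q}}$ is insolvable; testing modulo $q^{q}$ therefore simultaneously discards the $q\mid a_{j}$ terms and, via the lemma, certifies a genuine $q$-adic $q^{th}$ power among the remaining ones, so \enquote{some $a_{i}$ is a $q^{th}$ power modulo $q^{q}$} is equivalent to \enquote{some $a_{i}$ is a $q^{th}$ power in $\mathbb{Z}_{q}$}. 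This is condition (2). The same valuation/Hensel analysis applies at each $p_{i}\neq q$: a term with $p_{i}\mid a_{j}$ (so $1\leq\nu_{ij}\leq q-1$) cannot be a $q^{th}$ power modulo $p_{i}^{q}$, while a term with $p_{i}\nmid a_{j}$ is a $q^{th}$ power in $\mathbb{Z}_{p_{i}}$ if and only if it is one modulo $p_{i}$; this yields condition (3) (and when $p_{i}=q$ the condition is automatic modulo $q$ and is subsumed by condition (2)). Combining the three classes gives the claimed equivalence.
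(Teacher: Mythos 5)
Your proposal is correct, and its skeleton matches the paper's: reduce intersectivity to solvability modulo every integer (Kamae--Mend\'es-France), reduce to prime powers (CRT), and split the primes into the generic ones, the prime $q$, and the $p_{i}$ dividing $\prod_{j}a_{j}$, handling each class by a Hensel/valuation analysis. The genuine difference is where the hard input enters. The paper never proves the equivalence of condition (1) with solvability at the generic primes: it imports it wholesale as Proposition \ref{Mainresult2} (from \cite{MishraFFA}), so all of the algebraic number theory is hidden in that citation. You re-derive it: residuosity of $a_{j}$ modulo $p \equiv 1 \pmod{q}$ becomes the linear condition $\sum_{i}\nu_{ij}y_{i}=0$ on the vector of power residue symbols, and Chebotarev applied to the Kummer extension $\mathbb{Q}(\zeta_{q}, p_{1}^{1/q},\ldots,p_{k}^{1/q})/\mathbb{Q}(\zeta_{q})$ shows that every vector in $\mathbb{F}_{q}^{k}$ is realized by a positive density of primes, so the covering of $\mathbb{F}_{q}^{k}$ by the associated hyperplanes is exactly what is needed. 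This buys self-containedness at the cost of heavier machinery; the one step you should not gloss over is that the Galois group really is $(\mathbb{Z}/q\mathbb{Z})^{k}$, i.e.\ that $p_{1},\ldots,p_{k}$ remain multiplicatively independent modulo $q$th powers in $\mathbb{Q}(\zeta_{q})^{\times}$ and not merely in $\mathbb{Q}^{\times}$; this is where $\gcd(q,[\mathbb{Q}(\zeta_{q}):\mathbb{Q}])=\gcd(q,q-1)=1$ must be invoked. Two secondary divergences: (i) you package solvability modulo all $p^{n}$ as a root in $\mathbb{Z}_{p}$ by compactness and then argue in the integral domain $\mathbb{Z}_{p}$, whereas the paper stays with finite moduli via the elementary Lemma \ref{Elementary2} (if no factor has a root mod $p^{k}$ then $f$ has no root mod $p^{kl}$), avoiding $p$-adic language entirely; (ii) at $p=q$ you prove the sharper lifting statement that a unit which is a $q$th power modulo $q^{2}$ is a $q$th power in $\mathbb{Z}_{q}$, while the paper only needs the quantitative Hensel estimate $|g(x_{0})|_{q}\leq q^{-q} < q^{-2} = |g^{\prime}(x_{0})|_{q}^{2}$ available directly from condition (2). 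Your valuation arguments at the $p_{i}$ coincide with the paper's Lemma \ref{Elementary1}, and your remark that condition (3) at $p_{i}=q$ is subsumed by condition (2) is correct and consistent with the paper's case analysis.
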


Note that we stipulate $k \neq 1$ in Theorem \ref{mainresult} because one can easily verify that the polynomial $(x^{q}-p_{1})(x^{q} - p_{1}^{2}) \cdots (x^{q} - p_{1}^{q-1})$ is not intersective. Since $p_{1}$ is not a perfect $q^{th}$ power, there exist infinitely many primes $p$ not dividing $p_{1}$ such that $p_{1}$ is not a $q^{th}$ power modulo $p$. This follows from an application of the Grunwald-Wang theorem (see \cite[Theorem 1,pp. 75]{ArtinTate}). Then, it follows that none of the elements in the set $\{p_{1}, p_{1}^{2}, \ldots, p_{1}^{q-1}\}$ is a $q^{th}$ power $p$.  In other words, the polynomial $(x^{q}-p_{1})(x^{q} - p_{1}^{2}) \cdots (x^{q} - p_{1}^{q-1})$ does not have a root modulo such $p$.

\section{Some Elementary Results}
To prove our main result, we will use the following characterization of finite subsets of integers that contain a $q^{th}$ power modulo almost every prime. (see \cite[Theorem 1]{MishraFFA}). 

\begin{prop}
Let $q$ be an odd prime and let $A = \{a_{1}, a_{2}, \ldots, a_{l}\}\subset \mathbb{Z}\setminus\{0\}$ not containing a perfect $q^{th}$ power. Let $p_{1}, p_{2}, \ldots, p_{k}$ $(k \geq 2)$ be all the primes dividing elements of $\{\text{rad}_{q}(|a_{j}|) : 1 \leq j \leq l\}$ such that $p_{i}^{\nu_{ij}} \mid\mid \text{rad}_{q}(|a_{j}|)$ for every $1 \leq i \leq k$ and every $1 \leq j \leq l$. Then, the following three conditions are equivalent:
\begin{enumerate}
    \item The set $A$ contains a $q^{th}$ power modulo almost every prime. \vspace{1mm}
    
    \item For every prime $p\not\in\{q, p_{1}, p_{2}, \ldots, p_{k}\}$, the set $A$ contains a $q^{th}$ power modulo $p$. 
    
    \item The set of hyperplanes \[\Big\{ \sum_{i=1}^{k} \nu_{ij} x_{i} = 0 : 1 \leq j \leq l \Big\}\] associated with $A$, contains a linear covering of $\mathbb{F}_{q}^{k}$.
\end{enumerate}\label{Mainresult2}
\end{prop}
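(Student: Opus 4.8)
The plan is to first record a dictionary between $q$-th power residues and the hyperplanes $H_j := \{(x_i)_{i=1}^k \in \mathbb{F}_q^k : \sum_{i=1}^k \nu_{ij} x_i = 0\}$, and then close the cycle $(1) \Rightarrow (3) \Rightarrow (2) \Rightarrow (1)$. Fix a prime $p \notin \{q, p_1, \ldots, p_k\}$. If $q \nmid p-1$, then every element of $\mathbb{F}_p^{*}$ is a $q$-th power, so $A$ trivially contains a $q$-th power modulo $p$. If $q \mid p-1$, fix a character $\chi$ of $\mathbb{F}_p^{*}$ of order $q$ and write $\chi(p_i) = \zeta_q^{y_i}$ with $y_i \in \mathbb{F}_q$. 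Since $-1$ is a $q$-th power ($q$ being odd) and $a_j$ differs from $\text{rad}_q(|a_j|)$ only by a sign and a perfect $q$-th power, multiplicativity of $\chi$ gives $\chi(a_j) = \zeta_q^{\sum_i \nu_{ij} y_i}$. Hence $a_j$ is a $q$-th power modulo $p$ if and only if $y := (y_1, \ldots, y_k)$ lies on $H_j$, and $A$ contains a $q$-th power modulo $p$ if and only if $y \in \bigcup_j H_j$. I would also note that no $a_j$ has all $\nu_{ij}=0$: otherwise $\text{rad}_q(|a_j|)=1$, so $|a_j|=m^q$ and $a_j = \pm m^q$ would be a perfect $q$-th power (using $q$ odd), contradicting the hypothesis; thus each $H_j$ is a genuine proper hyperplane.

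With this dictionary, $(3) \Rightarrow (2)$ is immediate: if the $H_j$ cover $\mathbb{F}_q^k$, then for every admissible $p$ the point $y$ lies on some $H_j$, and since $p \notin \{p_1, \ldots, p_k\}$ makes $a_j$ a unit modulo $p$, that $a_j$ is a $q$-th power modulo $p$. The implication $(2) \Rightarrow (1)$ is trivial, because $\{q, p_1, \ldots, p_k\}$ is finite and hence has relative density $0$ in $\mathbb{P}$.

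The substance is $(1) \Rightarrow (3)$, which I would prove contrapositively via the Chebotarev density theorem. Suppose the $H_j$ fail to cover $\mathbb{F}_q^k$, and pick $v = (v_1, \ldots, v_k)$ with $\sum_i \nu_{ij} v_i \neq 0$ for every $j$. Over $K := \mathbb{Q}(\zeta_q)$ form the Kummer extension $L := K(\sqrt[q]{p_1}, \ldots, \sqrt[q]{p_k})$. First I would verify that $p_1, \ldots, p_k$ are independent in $K^{*}/(K^{*})^q$: a relation $\prod_i p_i^{c_i} = \alpha^q$ forces, by comparing $\mathfrak{P}$-adic valuations at a prime $\mathfrak{P} \mid p_i$ of $K$ (whose ramification index over $p_i$ is coprime to $q$), that $q \mid c_i$ for each $i$. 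Hence $\text{Gal}(L/K) \cong (\mathbb{Z}/q\mathbb{Z})^k$, and under this identification the Frobenius at an unramified prime $\mathfrak{p}$ of $K$ above a rational prime $p \equiv 1 \ (\text{mod } q)$ corresponds to the tuple $\big((p_1|\mathfrak{p})_q, \ldots, (p_k|\mathfrak{p})_q\big)$ of power residue symbols. By Chebotarev, the pattern $(\zeta_q^{v_1}, \ldots, \zeta_q^{v_k})$ is realized by a positive-density set of primes $\mathfrak{p}$; for the underlying rational primes $p$ (which satisfy $p \equiv 1 \ (\text{mod } q)$ and $p \nmid q\prod_i p_i$) the residue field at $\mathfrak{p}$ is $\mathbb{F}_p$, so taking $\chi = (\cdot|\mathfrak{p})_q$ gives $\chi(p_i) = \zeta_q^{v_i}$ and therefore $\chi(a_j) = \zeta_q^{\sum_i \nu_{ij} v_i} \neq 1$ for every $j$. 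Thus a positive-density set of primes admits no $q$-th power from $A$, negating $(1)$.

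The main obstacle is this last step, and within it the two linked points that must be handled with care: proving that $p_1, \ldots, p_k$ stay multiplicatively independent modulo $q$-th powers after adjoining $\zeta_q$ (so that $\text{Gal}(L/K)$ has the full order $q^k$ and every Frobenius pattern actually occurs), and matching the Frobenius element to the $k$-tuple of $q$-th power residue symbols, so that the Chebotarev output translates into the concrete conclusion that $\chi(a_j) \neq 1$ for all $j$. By contrast, the case split on whether $q \mid p-1$ and the reduction from $a_j$ to $\text{rad}_q(|a_j|)$, using that $-1$ and perfect $q$-th powers are $q$-th powers, are routine; I would still state them explicitly, since they are exactly what lets the arithmetic condition be read off the geometry of the hyperplanes.
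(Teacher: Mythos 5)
The paper offers no proof of Proposition \ref{Mainresult2} for you to be compared against: it is imported as a known result, citing Theorem 1 of \cite{MishraFFA}, and is then used as a black box in the proof of Theorem \ref{mainresult}. Your proposal therefore supplies a self-contained substitute for that citation, and it is essentially the standard argument, correctly executed: the case split on whether $q \mid p-1$; the character dictionary identifying the exponent vector of $\text{rad}_{q}(|a_{j}|)$ with the hyperplane $H_{j}$; the observation that no $H_{j}$ is all of $\mathbb{F}_{q}^{k}$ because $A$ contains no perfect $q$-th power (this is exactly where that hypothesis enters, and it is what makes \enquote{the union of the $H_{j}$ is $\mathbb{F}_{q}^{k}$} equivalent to \enquote{contains a linear covering}); and the Kummer--Chebotarev argument for $(1) \Rightarrow (3)$. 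Your verification that $p_{1}, \ldots, p_{k}$ stay independent in $K^{*}/(K^{*})^{q}$ via ramification indices prime to $q$ is correct and, importantly, covers the possibility that one of the $p_{i}$ equals $q$; the identification of Frobenius with the tuple of $q$-th power residue symbols is the standard Kummer-theoretic fact; and passing from a positive-density set of degree-one primes $\mathfrak{p}$ of $K$ to a positive-density set of underlying rational primes is legitimate because the fibers of $\mathfrak{p} \mapsto p$ have size at most $q-1$.

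One detail needs a patch. A prime $p$ can divide $a_{j}$ without dividing $\text{rad}_{q}(|a_{j}|)$ (exactly when the exponent of $p$ in $a_{j}$ is a positive multiple of $q$), and such $p$ need not lie in $\{p_{1}, \ldots, p_{k}\}$. For these primes your dictionary breaks: $\chi(a_{j}) = 0$ rather than $\zeta_{q}^{\sum_{i} \nu_{ij} y_{i}}$, and $a_{j} \equiv 0 \pmod{p}$ is trivially a $q$-th power. Consequently, your claim in $(3) \Rightarrow (2)$ that $p \notin \{p_{1}, \ldots, p_{k}\}$ makes $a_{j}$ a unit modulo $p$ is not literally true, although the desired conclusion still holds for such $p$ (trivially, since $a_{j} \equiv 0$). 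More substantively, in $(1) \Rightarrow (3)$ you must discard from your Chebotarev set the finitely many primes dividing $\prod_{j} a_{j}$, not merely those dividing $q\prod_{i} p_{i}$, before concluding that $\chi(a_{j}) \neq 1$ for all $j$ rules out every $q$-th power from $A$; since only finitely many primes are removed, positive density survives and the contradiction with $(1)$ is unaffected. With that one-sentence fix, your proof is complete.
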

We will also state and prove two elementary lemmas that we shall repeatedly use in the proof of Theorem \ref{mainresult}. 
\begin{lemma}
Let $a \in\mathbb{Z}$ be a $q$-free integer and $p$ be an odd prime such that $p^{\nu} \mid\mid a$ for some $1 \leq \nu \leq q-1$. Then, $a$ cannot be a $q^{th}$ power modulo $p^{\nu+1}$. \label{Elementary1}
\end{lemma}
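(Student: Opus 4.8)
The plan is to argue by $p$-adic valuation and derive a contradiction from the assumption that $x^{q} \equiv a \pmod{p^{\nu+1}}$ is solvable. First I would write $a = p^{\nu} u$ with $p \nmid u$, which is exactly the content of $p^{\nu} \mid\mid a$, and record that $v_{p}(a) = \nu$; here $v_{p}$ denotes the $p$-adic valuation. Since $1 \leq \nu \leq q-1$, the exponent $\nu$ is in particular not divisible by $q$, a fact that will be the linchpin of the argument.

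Suppose toward a contradiction that some integer $x$ satisfies $x^{q} \equiv a \pmod{p^{\nu+1}}$. Set $e = v_{p}(x)$, so that $v_{p}(x^{q}) = qe$. The crucial observation is that $v_{p}(x^{q}) = qe$ is a multiple of $q$, whereas $v_{p}(a) = \nu$ is not (because $1 \leq \nu \leq q-1$); hence $v_{p}(x^{q}) \neq v_{p}(a)$, and the two valuations can never coincide.

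Because the two valuations differ, the non-Archimedean (ultrametric) property of $v_{p}$ gives $v_{p}(x^{q} - a) = \min(qe, \nu)$. If $qe < \nu$ then $v_{p}(x^{q} - a) = qe \leq \nu - 1$, while if $qe > \nu$ then $v_{p}(x^{q} - a) = \nu$. In either case $v_{p}(x^{q} - a) \leq \nu < \nu + 1$, so $p^{\nu+1} \nmid (x^{q} - a)$, contradicting $x^{q} \equiv a \pmod{p^{\nu+1}}$. Therefore no such $x$ exists, and $a$ is not a $q^{th}$ power modulo $p^{\nu+1}$.

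I do not expect a genuine obstacle here; the only point requiring care is the book-keeping of valuations together with the observation that the hypothesis $1 \leq \nu \leq q-1$ is precisely what forces $q \nmid \nu$, ruling out the borderline case $qe = \nu$ in which the ultrametric equality $v_{p}(x^{q}-a) = \min(qe,\nu)$ could fail. (Note that the argument never actually uses that $p$ is odd; the parity hypothesis is harmless but superfluous for this lemma in isolation, and is presumably retained only because the lemma is applied to odd primes in the proof of Theorem \ref{mainresult}.)
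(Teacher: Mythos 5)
Your proof is correct and is essentially the paper's own argument recast in valuation language: both hinge on the fact that the $p$-adic valuation of $x^{q}$ is a multiple of $q$ (the paper phrases this as $p \mid x_{0}$ forcing $p^{q} \mid x_{0}^{q}$) while the valuation of $a$ equals $\nu$ with $0 < \nu < q$, yielding the same contradiction. Your side remark is also accurate: the paper's proof never uses that $p$ is odd either, so that hypothesis is indeed superfluous for this lemma in isolation.
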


\begin{proof}
Assume to the contrary that $x_{0}^{q} \equiv a \hspace{1mm} (\text{mod } p^{\nu+1})$ for some integer $x_{0}$, i.e., $p^{\nu+1} \mid (x_{0}^{q} - a)$. Since $p^{\nu} \mid a$ and $p^{\nu} \mid p^{\nu+1} \mid (x_{0}^{q} - a)$, we have that $p^{\nu} \mid x_{0}^{q}$. and hence $p \mid x_{0}^{q}$. However, $p \mid x_{0}^{q}$ implies $p^{q} \mid x_{0}^{q}$, which together with $p^{\nu+1} \mid (x_{0}^{q} - a)$ gives that $p^{\nu+1} \mid a$ because $\nu+1 \leq q$. However, $p^{\nu+1} \mid a$ is a contradiction to the assusmption $p^{\nu} \mid\mid a$. Therefore, $a$ cannot be a $q^{th}$ power modulo $p^{\nu+1}$.
\end{proof}

\begin{lemma}
Let $q$ and $p$ be prime numbers, $k,l \in\mathbb{N}$ and $f(x) =  \prod_{j=1}^{l} (x^{q} - a_{i})$ for integers $a_{1}, a_{2}, \ldots, a_{l}$. If $x^{q} - a_{i} \equiv 0 \hspace{1mm} (\text{mod}\hspace{1mm} p^{k})$ is not solvable for any $1 \leq i \leq l$ then, $f(x) \equiv 0 \hspace{1mm} (\text{mod}\hspace{1mm} p^{kl})$ is not solvable.  \label{Elementary2}
\end{lemma}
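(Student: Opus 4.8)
The plan is to argue by contradiction, exploiting the additivity of the $p$-adic valuation together with a pigeonhole count. Suppose, contrary to the conclusion, that there is an integer $x_{0}$ with $f(x_{0}) \equiv 0 \pmod{p^{kl}}$, i.e. $p^{kl} \mid \prod_{j=1}^{l}(x_{0}^{q} - a_{j})$. The goal is to extract from this a single index whose factor is divisible by $p^{k}$, which will directly contradict the hypothesis.

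First I would pass to valuations. Since $v_{p}$ is additive on products, the divisibility $p^{kl}\mid \prod_{j=1}^{l}(x_{0}^{q}-a_{j})$ is equivalent to $\sum_{j=1}^{l} v_{p}(x_{0}^{q} - a_{j}) \geq kl$. Then I would apply pigeonhole: if every summand satisfied $v_{p}(x_{0}^{q}-a_{j}) \leq k-1$, the total would be at most $l(k-1) = kl - l < kl$, using $l \geq 1$, which contradicts the inequality just obtained. Hence there exists an index $j_{0}$ with $v_{p}(x_{0}^{q}-a_{j_{0}}) \geq k$, that is, $p^{k}\mid (x_{0}^{q}-a_{j_{0}})$.

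Finally, this divisibility reads $x_{0}^{q}\equiv a_{j_{0}} \pmod{p^{k}}$, so the congruence $x^{q}-a_{j_{0}}\equiv 0 \pmod{p^{k}}$ is solvable, contradicting the assumption that $x^{q}-a_{i}\equiv 0 \pmod{p^{k}}$ is unsolvable for every $1\leq i \leq l$. This contradiction shows $f(x)\equiv 0 \pmod{p^{kl}}$ has no solution. I do not expect any genuine obstacle here: the argument is elementary, and the only point requiring mild care is the pigeonhole bound, where one must keep the strict inequality $l(k-1) < kl$ (valid precisely because $l\geq 1$) to force a single factor to absorb valuation at least $k$. Note that the primality of $q$ plays no role in this lemma; it is stated alongside $p$ only because it is the ambient notation of the paper.
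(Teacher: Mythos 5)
Your proof is correct and follows essentially the same route as the paper's: contradiction, followed by extracting a single factor divisible by $p^{k}$, which contradicts the unsolvability hypothesis. The only difference is that you make explicit the pigeonhole/valuation count (the step the paper asserts without justification when it writes ``we must have $p^{k} \mid (x_{0}^{q} - a_{j})$ for some $j$''), which is a welcome bit of added rigor but not a different argument.
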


\begin{proof}
For the sake of contradiction, assume that $f(x) \equiv 0 \hspace{1mm} (\text{mod}\hspace{1mm} p^{kl})$ is solvable for some $x_{0} \in\mathbb{Z}$, i.e., $p^{kl} \mid (x_{0}^{q} - a_{1}) \cdots (x_{0}^{q} - a_{l})$. Then we must have $p^{k} \mid (x_{0}^{q} - a_{j})$, for some $j \in \{1, 2, \ldots , l\}$. 

However, $p^{k} \mid x_{0}^{q} - a_{j}$ implies that $x_{0}^{q} - a_{j} \equiv 0 \hspace{1mm} (\text{mod}\hspace{1mm} p^{k})$ is solvable, a contradiction to the fact that $x^{q} - a_{i} \equiv 0 \hspace{1mm} (\text{mod}\hspace{1mm} p^{k})$ is not solvable for any $i$. Therefore, we have the result. 
\end{proof}
Now, we are ready to prove Theorem \ref{mainresult}. 
\section{Proof of Theorem \ref{mainresult}}
\subsection{Proof of necessity.} The necessity of the condition $(1)$ immediately follows from Proposition \ref{Mainresult2}. Now assume that condition $(2)$ of Theorem \ref{mainresult} is not satisfied, i.e., none of the $a_{i}$ is a $q^{th}$ power modulo $q^{q}$. Then, Lemma \ref{Elementary2} implies that $f(x) \equiv 0 \hspace{1mm} (\text{mod } q^{al})$ is not solvable. 

Now assume that the condition $(3)$ is not satisfied by $a_{1}, a_{2}, \ldots, a_{l}$. Then, there exists $1 \leq j_{0} \leq l$ and a prime $p \mid a_{j_{0}}$ such that for every $i \in\{1, 2, \ldots, l\}\setminus\{j_{0}\}$ we have following two cases:
\begin{enumerate}
    \item $p$ divides $a_{i}$. In this case, if $p^{\nu} \mid\mid a_{i}$, then $a_{i}$ is not a $q^{th}$ power modulo $p^{\nu+1}$. Since $\nu \leq q-1$, Lemma \ref{Elementary1} implies that  $a_{i}$ is not a $q^{th}$ power modulo $p^{q}$.
    
    \item $a_{i}$ is not a $q^{th}$ power modulo $p$, and consequently $a_{i}$ is not a $q^{th}$ power modulo $p^{q}$.
\end{enumerate}
Similarly, since $p \mid a_{j_{0}}$ we also have that $a_{j_{0}}$ is not a $q^{th}$ power modulo $p^{q}$, which follows from Lemma \ref{Elementary1}.

So far, we have that for every $i \in\{1, 2, \ldots, l\}$ the congruence $x^{q} - a_{i} \equiv 0 \hspace{1mm} (\text{mod } p^{q})$ is not solvable. Therefore, using Lemma \ref{Elementary2} we have that $f(x) \equiv 0 \hspace{1mm} (\text{mod } p^{ql})$ is not solvable. We have shown that if any of the conditions listed in Theorem \ref{mainresult} is not satisfied then, the polynomial $f(x)$ fails to have root modulo some integer. In other words, $f(x)$ does not satisfy \ref{defn1}.  

\subsection{Proof of sufficiency.} For the proof of this part, we will use the Hensel's lemma which implies that for $g(x) \in\mathbb{Z}[x]$ if there exists $a \in\mathbb{Z}$ such that $|g(a)|_{p} < |g^{\prime} (a)|^{2}_{p}$, then for every $i \in\mathbb{N}$ there exists $\alpha\in\mathbb{Z}$ such that $g(\alpha) \equiv 0 \hspace{1mm} (\text{mod } p^{i})$. Here, $|a|_{p}$ denotes the $p$-adic absolute value of $a$. For instance, if $p \mid g(a)$ and $p \nmid g^{\prime}(a)$, then the conclusion of the Hensel's lemma holds. Readers can find more on the Hensel's lemma in \cite[Proposition 3.5.2]{FrJar}. 

Assume that each condition in Theorem \ref{mainresult} is satisfied. We will show that $f(x) \equiv 0 \hspace{1mm}(\text{mod}\hspace{1mm} p^{b})$ is solvable for every prime $p$ and every integer $b \geq 1$. Three cases arise as follows:

\begin{enumerate}

\item If $p\not\in\{p_{1}, p_{2}, \ldots, p_{k}, q\}$, then Proposition \ref{Mainresult2} implies that one of the $a_{i}$, say $a_{i_{0}}$, is a $q^{th}$ power modulo $p$. In other words, for $g(x) = x^{q} - a_{i_{0}}$ there exists $b_{i_{0}}$ such that $g(b_{i_{0}}) = b_{i_{0}}^{q} - a_{i_{0}} \equiv 0 \hspace{1mm}(\text{mod } p)$ has a solution. 

If $p \mid b_{i_{0}}$, then using the fact that $p \mid (b_{i_{0}}^{q} - a_{i_{0}})$ we have that $p \mid a_{i_{0}}$. However, $p \mid a_{i_{0}}$ implies that $p \in\{p_{1}, p_{2}, \ldots, p_{k}\}$, which is a contradiction. Therefore, $p \nmid b_{i_{0}}$. 

The fact that $p\nmid b_{i_{0}}$, along with $p \neq q$, implies that $p \nmid g^{\prime}(b_{i_{0}}) = qb_{i_{0}}^{q-1}$. Since $p \mid g(b_{i_{0}})$ but $p \nmid g^{\prime} (b_{i_{0}})$, it follows from the Hensel's lemma that for every $b \geq 1$, the congruence $g(x) \equiv 0 \hspace{1mm}(\text{mod } p^{b})$.\vspace{1mm}

\item On the other hand, if $p$ is a prime different from $q$ such that $p \mid \prod_{j=1}^{l} a_{j}$, then $p \mid a_{j}$ for some $j \in\{1, 2, \ldots, l\}$. Then, condition $(3)$ in Theorem \ref{mainresult} implies that there exists $a_{i}$ with $p \nmid a_{i}$ such that $a_{i}$ is a $q^{th}$ power modulo $p$. In this case, $g(b_{i}) = (b_{i}^{q} - a_{i}) \equiv 0 \hspace{1mm} (\text{mod } p)$ for some integer $b_{i}$. 

Similar to that in the previous case, an application of the Hensel's lemma for the polynomial $(x^{q} - a_{i})$ again implies that $a_{i}$ is a $q^{th}$ power modulo $p^{b}$ for every $b \geq 1$. 

\item If $p = q$, then the condition $(2)$ of Theorem \ref{mainresult} implies that one of the $a_{i}$ is a $q^{th}$ power modulo $q^{q}$, i.e., $x_{0}^{q} - a_{r} \equiv 0 \hspace{1mm} (\text{mod } q^{q})$ for some $r \in\{1, 2, \ldots, l\}$ and some integer $x_{0}$. If we denote $g(x) := x^{q} - a_{r}$, then we see that $q^{q} \mid g(x_{0})$, i.e.,  $|g(x_{0})|_{q} \leq q^{-q}$. Here $|y|_{q}$ denotes the $q$-adic valuation of $y$. 

On the other hand, we claim that $|g^{\prime}(x_{0})|_{q} = q^{-1}$. Note that $q \nmid x_{0}$. Otherwise, $q \mid x_{0}$ along with $q^{q} \mid (x_{0}^{q} - a_{r})$ gives that $q^{q} \mid a_{r}$ contradicting the fact that $a_{r}$ is $q$-free. Since, $q \nmid x_{0}$ we have that $q \mid\mid qx_{0}^{q-1}$, i.e., $|g^{\prime}(x_{0})|_{q} = q^{-1}$. Therefore, \[|g^{\prime}(x_{0})|^{2}_{q} = q^{-2} > q^{-q} \geq |g(x_{0})|_{q} \] because $q$ is an odd prime. Hence, the Hensel's lemma implies that $g(x) \equiv 0 \hspace{1mm} (\text{mod } q^{b})$, and hence $f(x) \equiv 0 \hspace{1mm} (\text{mod } q^{b})$, is solvable for every $b \geq 1$. 
\end{enumerate}
So far, we have proved that for any prime $p$ and any integer $b \geq 1$ the congruence $f(x) \equiv 0 \hspace{1mm} (\text{mod } p^{b})$ is solvable. Now, it follows from the Chinese Remainder Theorem that the polynomial $f$ satisfies equation \ref{defn1}. 

\section{Corollaries and Examples}
Our first corollary shows that we need at least $q+1$ many distinct factors of the form $(x^{q} - a_{i})$ for the polynomial $f(x) = \prod (x^{q} - a_{i})$ to be intersective without having integer roots. 
\begin{corollary}
Let $q$ be a prime and let $l\in\mathbb{N}$ such that $l \leq q$. If $\prod_{j=1}^{l} (x^{q} - a_{j}) \in\mathbb{Z}[x]$ is intersective, then there exists $1 \leq j_{0} \leq l$ such that $a_{j_{0}}$ is a perfect $q^{th}$ power. \label{corlowerbound}
\end{corollary}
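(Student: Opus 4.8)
The plan is to prove the contrapositive: assuming $l \le q$ and that none of $a_{1}, \ldots, a_{l}$ is a perfect $q^{th}$ power, I will exhibit a prime power modulo which $f(x) = \prod_{j=1}^{l}(x^{q} - a_{j})$ has no root, so that $f$ fails \eqref{defn1} and is therefore not intersective. First I would reduce to $q$-free parts using the discussion in the subsection on adjustment within a positive $q$-free class: since $q$ is odd and $-1 = (-1)^{q}$, for a prime $p \nmid a_{j}$ the integer $a_{j}$ is a $q^{th}$ power modulo $p$ if and only if $\text{rad}_{q}(|a_{j}|)$ is, and $\text{rad}_{q}(|a_{j}|) = 1$ precisely when $a_{j}$ is a perfect $q^{th}$ power (this also disposes of the trivial cases $a_{j} \in \{-1,0,1\}$, each of which is itself a perfect $q^{th}$ power). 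Thus under our hypothesis every $\text{rad}_{q}(|a_{j}|) \neq 1$. Let $p_{1}, \ldots, p_{k}$ be the primes dividing $\prod_{j} \text{rad}_{q}(|a_{j}|)$, with exponents $\nu_{ij}$ as in Theorem \ref{mainresult}; then $k \ge 1$, and I split into two cases.

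In the principal case $k \ge 2$, each $a_{j}$ contributes the hyperplane $\{(x_{i})_{i=1}^{k} \in \mathbb{F}_{q}^{k} : \sum_{i=1}^{k} \nu_{ij} x_{i} = 0\}$, which is a \emph{proper} subspace because its coefficient vector $(\nu_{1j}, \ldots, \nu_{kj})$ is nonzero (as $\text{rad}_{q}(|a_{j}|) \neq 1$). There are at most $l \le q$ of these, whereas by Proposition \ref{ILC} any linear covering of $\mathbb{F}_{q}^{k}$ has at least $\#\text{LC}(\mathbb{F}_{q}^{k}) = q+1$ members; hence these hyperplanes cannot contain a linear covering and condition $(3)$ of Proposition \ref{Mainresult2} fails. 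By the equivalence of $(3)$ with $(2)$ in that proposition, there is a prime $p \notin \{q, p_{1}, \ldots, p_{k}\}$ modulo which no $a_{j}$ is a $q^{th}$ power. Then $x^{q} - a_{j} \equiv 0 \ (\text{mod } p)$ is unsolvable for every $j$, and Lemma \ref{Elementary2} gives that $f(x) \equiv 0 \ (\text{mod } p^{l})$ is unsolvable, so $f$ is not intersective. This case is where the hypothesis $l \le q$ is used, through the strict inequality $l \le q < q+1 = \#\text{LC}(\mathbb{F}_{q}^{k})$.

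The case $k = 1$ must be handled separately, since it lies outside the scope of Theorem \ref{mainresult} and Proposition \ref{Mainresult2}, and I would argue directly as in the Remark following Theorem \ref{mainresult}. Here every $\text{rad}_{q}(|a_{j}|)$ is a power $p_{1}^{\nu_{1j}}$ with $1 \le \nu_{1j} \le q-1$. Because $p_{1}$ is not a perfect $q^{th}$ power, the Grunwald–Wang theorem supplies a prime $p \nmid p_{1}$ with $p \equiv 1 \ (\text{mod } q)$ for which $p_{1}$ is not a $q^{th}$ power modulo $p$. Writing $p_{1} \equiv g^{t} \ (\text{mod } p)$ for a primitive root $g$, non-$q^{th}$-power of $p_{1}$ means $q \nmid t$; since $\gcd(\nu_{1j}, q) = 1$ this forces $q \nmid t\nu_{1j}$, so $p_{1}^{\nu_{1j}}$, and hence $a_{j}$, is not a $q^{th}$ power modulo $p$, for every $j$. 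As before, Lemma \ref{Elementary2} yields that $f$ has no root modulo $p^{l}$, and no bound on $l$ is needed (consistent with the Remark).

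I expect the $k = 1$ case to be the main obstacle, precisely because it is not covered by the earlier results and so requires the Grunwald–Wang input by hand. The observation that makes it go through is that, as every $\nu_{1j}$ is a nonzero residue modulo $q$, a single prime witnessing that $p_{1}$ is not a $q^{th}$ power automatically witnesses the same for all the $a_{j}$ at once; everything else is a routine application of Proposition \ref{ILC}, Proposition \ref{Mainresult2}, and Lemma \ref{Elementary2}.
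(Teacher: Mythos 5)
Your proof is correct and rests on the same two pillars as the paper's own argument: Proposition \ref{ILC} (a linear covering of $\mathbb{F}_{q}^{k}$ requires at least $q+1$ proper subspaces) and Proposition \ref{Mainresult2}. Your contrapositive phrasing --- passing from the failure of condition $(3)$ of Proposition \ref{Mainresult2} to the failure of condition $(2)$, and then invoking Lemma \ref{Elementary2} to exhibit an explicit modulus $p^{l}$ with no root --- is just the paper's contradiction argument run in reverse, so that part is essentially identical. The genuine difference is your case $k=1$: the paper's proof applies Proposition \ref{Mainresult2} and Proposition \ref{ILC} directly, both of which are stated only for $k \geq 2$, and it says nothing about the situation where the $q$-free parts of all the $a_{j}$ are powers of a single prime $p_{1}$. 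Your separate Grunwald--Wang argument (in the spirit of the paper's remark following Theorem \ref{mainresult}, but for arbitrary exponents $\nu_{1j}$ coprime to $q$) fills exactly this hole, so your write-up is the more complete of the two; this extra case is a real improvement, not a detour. One small slip to repair there: you only require $p \nmid p_{1}$, but the step ``$p_{1}^{\nu_{1j}}$ is not a $q$th power modulo $p$, hence $a_{j}$ is not'' invokes your reduction to $q$-free parts, which needs $p \nmid a_{j}$. If $p$ happened to divide the perfect-$q$th-power part of some $a_{j}$, then $a_{j} \equiv 0 \pmod{p}$ would trivially be a $q$th power modulo $p$ and the hypothesis of Lemma \ref{Elementary2} would fail for that factor. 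The fix is immediate: Grunwald--Wang provides infinitely many admissible primes, so choose $p$ coprime to $\prod_{j} a_{j}$ rather than merely to $p_{1}$.
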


\begin{proof}
For the sake of contradiction, assume that for some $l \leq q$, $\prod_{j=1}^{l} (x^{q} - a_{j})$ is intersective and the set $A = \{a_{j}\}_{j=1}^{l}$ does not contain a perfect $q^{th}$ power. In particular, the set $A$ contains a $q^{th}$ power modulo almost every prime. Then, by Proposition \ref{Mainresult2}, the set of $l$ hyperplanes associated with $\{a_{j}\}_{j=1}^{l}$ must form a linear covering of $\mathbb{F}_{q}^{k}$. Since by Proposition \ref{ILC} a linear covering of $\mathbb{F}_{q}^{k}$ must have cardinality at least $q+1$, we must have $l \geq q+1$, which is a contradiction to $l \leq q$. Therefore, there exists $1 \leq j_{0} \leq l$ such that $a_{j_{0}}$ is a perfect $q^{th}$ power. 
\end{proof}

Our next corollary shows that the intersectivity of a polynomial $\prod_{j=1}^{l} (x^{q} - a_{j})$ is invariant under exponentiation of integers $a_{j}$ by non-zero elements of $\mathbb{F}_{q}$. 

\begin{corollary}
Let $q$ be an odd prime, $\{a_{1}, a_{2}, \ldots, a_{l}\}\in\mathbb{Z}\setminus\{-1,1\}$ be a finite subset of $q$-free integers and $(c_{j})_{j=1}^{l}\in\big(\mathbb{F}_{q}\setminus\{0\}\big)^{l}$. Then the polynomial \[(x^{q} - a_{1}) (x^{q} - a_{2}) \cdots (x^{q} - a_{l}) \] is intersective if and only if the polynomial \[\big(x^{q} - \text{rad}_{q}(a_{1}^{c_{1}})\big) \big(x^{q} - \text{rad}_{q}(a_{2}^{c_{2}})\big) \cdots \big(x^{q} - \text{rad}_{q}(a_{l}^{c_{l}})\big) \] is intersective. \label{corexponentiation}
\end{corollary}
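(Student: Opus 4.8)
The plan is to invoke Theorem \ref{mainresult} and show that the two polynomials satisfy its three conditions \emph{simultaneously}. First I would record the effect of the map $a_j \mapsto \mathrm{rad}_q(a_j^{c_j})$ on prime supports and exponents. Writing $a_j = \pm\prod_{i=1}^{k} p_i^{\nu_{ij}}$ with $0 \le \nu_{ij} \le q-1$ and fixing representatives $c_j \in \{1, \dots, q-1\}$, one has $\mathrm{rad}_q(a_j^{c_j}) = \pm\prod_{i=1}^{k} p_i^{(c_j\nu_{ij}) \bmod q}$, with the same sign as $a_j^{c_j}$. Since $c_j \not\equiv 0 \pmod q$ and $0 \le \nu_{ij} \le q-1$, we have $(c_j\nu_{ij}) \bmod q = 0$ if and only if $\nu_{ij} = 0$; hence each $\mathrm{rad}_q(a_j^{c_j})$ is divisible by exactly the same primes as $a_j$. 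In particular the ambient prime list $p_1, \dots, p_k$ and the dimension $k$ are unchanged, none of the new integers equals $\pm 1$, and each is $q$-free, so Theorem \ref{mainresult} applies verbatim to the second polynomial. A computation I would carry out once is the identity $a_j^{c_j} = \mathrm{rad}_q(a_j^{c_j})\, M_j^{\,q}$, where $M_j = \prod_i p_i^{\lfloor c_j\nu_{ij}/q\rfloor}$ is a positive integer whose prime divisors all divide $a_j$; this is the bridge between $a_j$ and its replacement.

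For condition (1), the hyperplane attached to index $j$ for the second polynomial is $\sum_i \big((c_j\nu_{ij}) \bmod q\big)\, x_i = 0$, that is, $c_j \sum_i \nu_{ij} x_i = 0$ in $\mathbb{F}_q^{k}$. As $c_j \in \mathbb{F}_q\setminus\{0\}$, this defines exactly the same hyperplane as $\sum_i \nu_{ij} x_i = 0$. Thus the two polynomials have literally the same associated family of hyperplanes, so condition (1) holds for one if and only if it holds for the other.

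Conditions (2) and (3) both reduce to the following stability fact, which I would isolate as the heart of the argument: for a prime $p$ and an index $j$ with $p \nmid a_j$, the integer $a_j$ is a $q^{th}$ power modulo any power $p^{m}$ if and only if $\mathrm{rad}_q(a_j^{c_j})$ is. Indeed, in $(\mathbb{Z}/p^{m}\mathbb{Z})^\times$ the $q^{th}$ powers form a subgroup $H$ whose quotient is cyclic of order dividing $q$; raising to the power $c_j$ (coprime to $q$) induces an automorphism of this quotient fixing its identity, so $a_j \in H$ if and only if $a_j^{c_j} \in H$. Since $p \nmid a_j$ forces $p \nmid M_j$, the factor $M_j^{\,q}$ is a $q^{th}$-power unit modulo $p^{m}$, whence $a_j^{c_j}$ and $\mathrm{rad}_q(a_j^{c_j})$ are $q^{th}$ powers modulo $p^{m}$ together. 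Applying this with $m = 1$, and observing that $p_i \nmid a_j \iff p_i \nmid \mathrm{rad}_q(a_j^{c_j})$, yields condition (3) at once, since the same indices $j$ are eligible for each $p_i$.

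The main obstacle is condition (2), because there the modulus is $q^{q}$ and the relevant integers need not be units modulo $q$. I would split on whether $q \mid a_i$. If $q \nmid a_i$, then $a_i$ is a unit modulo $q^{q}$ and the stability fact above (with $p = q$, $m = q$) applies directly. If instead $q^{\nu} \mid\mid a_i$ with $1 \le \nu \le q-1$, then Lemma \ref{Elementary1} shows $a_i$ is not a $q^{th}$ power modulo $q^{\nu+1}$, hence not modulo $q^{q}$; and since $q \nmid c_i$ together with $1 \le \nu \le q-1$ force $\mu := (c_i\nu) \bmod q \in \{1, \dots, q-1\}$, the same lemma applied to $\mathrm{rad}_q(a_i^{c_i})$, which is exactly divisible by $q^{\mu}$, shows it too fails to be a $q^{th}$ power modulo $q^{q}$. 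Thus for every $i$, $a_i$ is a $q^{th}$ power modulo $q^{q}$ if and only if $\mathrm{rad}_q(a_i^{c_i})$ is, so condition (2) is preserved. Having matched all three conditions, Theorem \ref{mainresult} yields the claimed equivalence of intersectivity.
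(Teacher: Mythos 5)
Your proof is correct, and it follows the same overall strategy as the paper: reduce the corollary to showing that each of the three conditions of Theorem \ref{mainresult} holds for $\{a_{j}\}$ if and only if it holds for $\{\mathrm{rad}_{q}(a_{j}^{c_{j}})\}$, with the hyperplane argument for condition (1) being identical (the coefficient vectors are scaled by $c_{j}\neq 0$, so the hyperplanes coincide). Where you genuinely differ is the technical core. The paper verifies condition (3) via the $q^{th}$ power residue symbol (if $\big(a|p\big)_{q}=\zeta_{q}^{j}\neq 1$ then $\big(\mathrm{rad}_{q}(a^{c})|p\big)_{q}=\zeta_{q}^{jc}\neq 1$), and verifies condition (2) by explicit congruence manipulation: from $y^{q}\equiv a \pmod{q^{q}}$ it passes to $(y^{c}r^{-1})^{q}\equiv \mathrm{rad}_{q}(a^{c})$, and in the converse direction it raises $zr$ to the power ``$1/c$'', a step whose justification is left rather terse in the paper. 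Your single stability lemma --- that in $(\mathbb{Z}/p^{m}\mathbb{Z})^{\times}$ the $q^{th}$ powers form a subgroup whose quotient is cyclic of order dividing $q$, so exponentiation by $c$ coprime to $q$ induces an automorphism of the quotient and hence preserves (non-)membership --- handles both conditions uniformly ($m=1$ for condition (3); $p=q$, $m=q$ for the unit case of condition (2)), and it is precisely the group-theoretic content that makes the paper's ``$(zr)^{1/c}$'' step rigorous; the non-unit case of condition (2) is handled the same way in both proofs, via Lemma \ref{Elementary1}. Your explicit observation that $a_{j}\mapsto \mathrm{rad}_{q}(a_{j}^{c_{j}})$ preserves prime supports (so the prime list and $k$ are unchanged, the new integers are $q$-free and never $\pm 1$, and the same indices are eligible in condition (3)) is needed for the theorem to apply verbatim, and is implicit rather than spelled out in the paper. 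Net comparison: your route is more unified and avoids power residue symbols entirely, at the modest cost of invoking the structure of the unit groups $(\mathbb{Z}/p^{m}\mathbb{Z})^{\times}$; the paper's route is more computational and self-contained but splits into two ad hoc arguments.
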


\begin{proof}
Let $p_{1}, p_{2}, \ldots, p_{k}$ be all the primes dividing $\prod_{j=1}^{l} a_{j}$ and let $\nu_{ij} \geq 0$ be such that $p_{i}^{\nu_{ij}} \mid\mid a_{j}$ for every $1 \leq i \leq k$ and $1 \leq j \leq l$. For every $1 \leq j \leq l$, since $a_{j} = \prod_{i=1}^{k} p_{i}^{\nu_{ij}}$, we have that $a_{j}^{c_{j}} = \prod_{i=1}^{k} p_{i}^{c_{j}\nu_{ij}}$. Now, the corollary follows from Theorem \ref{mainresult} and the following three observations:
\begin{enumerate}
    \item Then, the set of hyperplanes associated with the set $\{a_{j}^{c_{j}}\}_{j=1}^{l}$ is  \[H_{1}:= \Bigg\{ \Big\{ \big(x_{i}\big)_{i=1}^{k} \in\mathbb{F}_{q}^{k} : \sum_{i=1}^{k} c_{j}\nu_{ij} x_{i} = 0 \Big\} : 1 \leq j \leq l \Bigg\},\] which is the same as the set of hyperplanes  \[H_{2} := \Bigg\{ \Big\{ \big(x_{i}\big)_{i=1}^{k} \in\mathbb{F}_{q}^{k} : \sum_{i=1}^{k} \nu_{ij} x_{i} = 0 \Big\} : 1 \leq j \leq l \Bigg\}\] associated with the set $\{a_{j}\}_{j=1}^{l}$.\vspace{1mm}
    
    \item For any prime $p \in\{p_{1}, p_{2}, \ldots, p_{k}\}$, a non-zero $q$-free integer $a$, with $p\nmid a$, is a $q^{th}$ power modulo $p$ if and only if $\text{rad}_{q}(a^{c})$ is a $q^{th}$ power modulo $p$ for every $c \in\mathbb{F}_{q}\setminus\{0\}$. Clearly, if $a$ is a $q^{th}$ power modulo $p$, then so is $\text{rad}_{q}(a^{c})$. On the other hand, assume that $a$ is not a $q^{th}$ power modulo $p$. Then, $\big(a|p\big)_{q} = \zeta_{q}^{j}$ for some $j \nmid q$. Therefore, we have \[\big(\text{rad}_{q}(a^{c})|p\big)_{q} = (\zeta_{q}^{j})^{c} \neq 1\] because $c\in\mathbb{F}_{q}\setminus\{0\}$ and $j \nmid q$. Therefore, $\text{rad}_{q}(a^{c})$ is not a $q^{th}$ power modulo $p$ either. \vspace{1mm}
    
    \item A non-zero $q$-free integer $a$ is a $q^{th}$ power modulo $q^{q}$ if and only if $\text{rad}_{q}(a^{c})$ is a $q^{th}$ power modulo $q^{q}$ for every $c \in\mathbb{F}_{q}\setminus\{0\}$. First, we note that if $a$ is a non-zero $q$-free integer and $q \mid a$, then both $a$ and $\text{rad}_{q}(a^{c})$ cannot be a $q^{th}$ power modulo $q^{q}$ as a consequence of Lemma \ref{Elementary1}. 
    
    Now assume that $q \nmid a$ and that $a^{c} = \text{rad}_{q}(a^{c}) \cdot r^{q}$ for some integer $r$ with $q \nmid r$. Assume that $a$ is a $q^{th}$ power modulo $q^{q}$ and $c \in\mathbb{F}_{q}\setminus\{0\}$. Then, for some integer $y$, we have \[q^{q} \mid (y^{q} - a) \mid \big( (y^{c})^{q} - a^{c}) = \big( (y^{c})^{q} - \text{rad}_{q}(a^{c}) \cdot r^{q} \big) \text{ and hence, }\] \[q^{q} \mid \Big( \big(\frac{y^{c}}{r}\big)^{q} - \text{rad}_{q}(a^{c}) \Big) \text{ since } q \nmid r.\] Here, $\frac{y^{c}}{r} = y^{c}r^{-1}$, where $r^{-1}$ denotes the inverse of $r \hspace{1mm} (\text{mod } q^{2})$. Therefore, $\text{rad}_{q}(a^{c})$ is also a $q^{th}$ power modulo $q^{q}$. 
    
    On the other hand, assume that $\text{rad}_{q}(a^{c})$ is a $q^{th}$ power modulo $q^{q}$, i.e., for some integer $z$, $q^{q} \mid \big( z^{q} - \text{rad}_{q}(a^{c})\big)$. Therefore, we have \[q^{q} \mid r^{q}\big(z^{q} - \text{rad}_{q}(a^{c})\big) = \big((zr)^{q} - a^{c}\big).\] In other words, we have $(zr)^{q} \equiv a^{c} \hspace{1mm} (\text{mod } q^{q})$, and hence $\big((zr)^{1/c}\big)^{q} \equiv a \hspace{1mm} (\text{mod } q^{q})$, which shows that $a$ is a $q^{th}$ power modulo $q^{q}$. Note that since $q \nmid a$, $q \nmid z$, and $q \nmid r$, therefore $zr$ and $(zr)^{1/c}$ is defined modulo $q^{q}$.
\end{enumerate}
\end{proof}

\subsection{Examples}
Now, we will use Theorem \ref{mainresult}, to produce illustrative examples of families of intersective polynomials for $q = 3$ and $q = 5$. The analogous examples for $q = 2$ are given in \cite{MishraAMM} and \cite{MishraIntegers}.
\subsection{q = 3.}
\begin{corollary}
Let $p_{1}, p_{2}$ be two distinct primes different from $3$. Then, the polynomial \[f_{1}(x) = (x^{3} - p_{1}) (x^{3} - p_{2}) (x^{3} - p_{1}p_{2}) (x^{3} - p_{1}^{2}p_{2})\] is intersective if and only if the following conditions are satisfied:
\begin{itemize}
    \item Some element of the set $\{p_{1}, p_{2}, p_{1}p_{2}, p_{1}^{2}p_{2}\}$ is a cube modulo $27$. \vspace{1mm}
    
    \item $p_{1}$ is a cube modulo $p_{2}$ and $p_{2}$ is a cube modulo $p_{1}$. 
\end{itemize} \label{cor1}
\end{corollary}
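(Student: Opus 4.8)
The plan is to apply Theorem \ref{mainresult} directly with $q = 3$ and the four $q$-free integers $a_{1} = p_{1}$, $a_{2} = p_{2}$, $a_{3} = p_{1}p_{2}$, $a_{4} = p_{1}^{2}p_{2}$. Here the primes dividing $\prod_{j} a_{j}$ are exactly $p_{1}$ and $p_{2}$, so $k = 2$, and the exponent data $(\nu_{1j}, \nu_{2j})$ are the columns $(1,0)$, $(0,1)$, $(1,1)$, $(2,1)$. I would then verify the three conditions of Theorem \ref{mainresult} in turn, showing that condition $(1)$ is automatic while conditions $(2)$ and $(3)$ reduce precisely to the two displayed bullets.

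First I would dispose of condition $(1)$. The four hyperplanes associated with $\{a_{1}, a_{2}, a_{3}, a_{4}\}$ are the lines in $\mathbb{F}_{3}^{2}$ given by $x_{1} = 0$, $x_{2} = 0$, $x_{1} + x_{2} = 0$, and $2x_{1} + x_{2} = 0$. The one computation needed is that $2x_{1} + x_{2} = 0$ is the line $x_{2} = x_{1}$ (since $-2 \equiv 1 \pmod 3$), so these are four \emph{distinct} one-dimensional subspaces of $\mathbb{F}_{3}^{2}$. Because $\mathbb{F}_{3}^{2}$ has exactly $(3^{2}-1)/(3-1) = 4$ lines through the origin, these four lines are all of them and their union is all of $\mathbb{F}_{3}^{2}$; by Proposition \ref{ILC} they form a linear covering. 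Thus condition $(1)$ holds with no hypothesis on $p_{1}, p_{2}$ at all.

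Condition $(2)$ of Theorem \ref{mainresult} asks that some $a_{i}$ be a cube modulo $q^{q} = 27$, which is verbatim the first bullet, so nothing further is required there. For condition $(3)$ I would translate the ``coprime-to-$p_{i}$'' requirement prime by prime. For $i=1$ the only $a_{j}$ with $p_{1} \nmid a_{j}$ is $a_{2} = p_{2}$, so condition $(3)$ for $p_{1}$ says exactly that $p_{2}$ is a cube modulo $p_{1}$; symmetrically, the only $a_{j}$ coprime to $p_{2}$ is $a_{1} = p_{1}$, so condition $(3)$ for $p_{2}$ says $p_{1}$ is a cube modulo $p_{2}$. Together these are the second bullet. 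Combining, $f_{1}$ is intersective if and only if conditions $(1)$--$(3)$ hold, if and only if the two bullets hold.

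I do not expect a genuine obstacle, since the corollary is a transcription of Theorem \ref{mainresult} to a concrete quadruple. The only point demanding care is the bookkeeping for condition $(1)$: one must check that the four hyperplanes are \emph{distinct} (so that they genuinely cover $\mathbb{F}_{3}^{2}$ rather than collapsing onto fewer lines), and in condition $(3)$ one must confirm that the coprimality filter selects exactly one factor for each of $p_{1}$ and $p_{2}$. Both are short finite checks.
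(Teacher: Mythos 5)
Your proposal is correct and follows essentially the same route as the paper: apply Theorem \ref{mainresult} with $q=3$ and $(a_1,a_2,a_3,a_4)=(p_1,p_2,p_1p_2,p_1^2p_2)$, observe that conditions $(2)$ and $(3)$ are verbatim the two bullets, and check that the four associated hyperplanes cover $\mathbb{F}_3^2$. The only cosmetic difference is that you verify the covering by counting (four distinct lines in $\mathbb{F}_3^2$ are all of its lines), whereas the paper checks directly that every point satisfies one of the four equations; your parenthetical appeal to Proposition \ref{ILC} is superfluous but harmless, since your counting argument already establishes the covering.
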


\begin{proof}
We note that the two conditions listed in the statement of the corollary are exactly the conditions $(2)$ and $(3)$ in the statement of Theorem \ref{mainresult}, for the prime $q = 3$ and the cube-free integers $a_{1} = p_{1}$, $a_{2} = p_{2}$, $a_{3} = p_{1}p_{2}$ and $a_{4} = p_{1}^{2}p_{2}$. Therefore, it suffices to show that $a_{1}, a_{2}, a_{3}, a_{4}$ satisfy the condition $(1)$ in Theorem \ref{mainresult}. Note that $p_{1}, p_{2}$ are the primes dividing $\prod_{j=1}^{4} a_{j}$ and 
\begin{multline*}
    \\ \nu_{11} = 1, \nu_{12} = 0, \nu_{13} = 1, \nu_{14} = 2 \\ \nu_{21} = 0, \nu_{22} = 1, \nu_{23} = 1, \nu_{24} = 1 \\
\end{multline*}
Therefore, the set of hyperplanes in $\mathbb{F}_{3}^{2}$ associated with $\{a_{1}, a_{2}, a_{3}, a_{4}\}$ is \[H_{1}:= \Big\{ (x_{1}, x_{2})\in\mathbb{F}_{3}^{2}: x_{1} = 0, x_{2} = 0, x_{1} + x_{2} = 0  \text{ and } 2x_{1} + x_{2} = 0.\Big\}.\] One can easily see that every $(x_{1}, x_{2}) \in\mathbb{F}_{3}^{2}$ satisfies one of the equations of hyperplanes in $H_{1}$. In other words, the set of hyperplanes $H_{1}$ form a linear covering of $\mathbb{F}_{3}^{2}$. The corollary is now established. 
\end{proof}
Following are some concrete examples of intersective polynomials obtained from Corollary \ref{cor1}. 
\begin{enumerate}
    \item The polynomial \[f(x) =  (x^{3} - 7) (x^{3} - 251) (x^{3} - 7 \times 251) (x^{3} - 7^{2} \times 251)\] is intersective as a result of applying Corollary \ref{cor1} for $p_{1} = 7$ and $p_{2} = 251$. Note that, \[251 \equiv 6 \equiv 3^{3} \hspace{1mm} (\text{mod } 7) \text{ and } 251 \equiv 8 \equiv 2^{3} \hspace{1mm} (\text{mod } 27). \] Furthermore, $7$ is a cube modulo $251$ because $251 \equiv 2 \hspace{1mm} (\text{mod } 3)$ and every non-zero residue class is a cube modulo any prime equivalent to $2$ modulo $3$.  
    
    \item The polynomial \[f(x) =  (x^{3} - 7) (x^{3} - 2141)  (x^{3} - 7 \times 2141 ) (x^{3} - 7^{2} \times 2141)\] is intersective as a result of applying Corollary \ref{cor1} for $p_{1} = 7$ and $p_{2} = 2141$. Note that, \[2141 \equiv 6 \equiv 3^{3} \hspace{1mm} (\text{mod } 7) \text{ and } 2141 \equiv 8 \equiv 2^{3} \hspace{1mm} (\text{mod } 27). \] Furthermore, $7$ is a cube modulo $2141$ because $2141 \equiv 2 \hspace{1mm} (\text{mod } 3)$ and every non-zero residue class is a cube modulo a prime that is equivalent to $2 (\text{mod } 3)$. 
\end{enumerate}
From each of the above examples $(1)$ and $(2)$, using Corollary \ref{corexponentiation}, we obtain $|\big(\mathbb{F}_{3}\setminus\{0\}\big)^{4}| = 16$ total examples of intersective polynomials.

\subsection{q = 5.}
\begin{corollary}
Let $p_{1}, p_{2}$ be two distinct primes different from $5$. Then, the polynomial \[f(x) = (x^{5} - p_{1})(x^{5} - p_{2}) (x^{5} - p_{1}p_{2}) (x^{5} - p_{1}^{2}p_{2}) (x^{5} - p_{1}p_{2}^{2}) (x^{5} - p_{1}^{4}p_{2})\] is intersective if and only if the following conditions are satisfied:
\begin{itemize}
    \item Some element of the set \[\{p_{1}, p_{2}, p_{1}p_{2}, p_{1}^{2}p_{2}, p_{1}p_{2}^{2}, p_{1}^{4}p_{2}\}\] is a fifth power modulo $3125$. \vspace{1mm}
    
    \item $p_{1}$ is a fifth power modulo $p_{2}$, and $p_{2}$ is a fifth power modulo $p_{1}$.
\end{itemize}\label{cor2}
\end{corollary}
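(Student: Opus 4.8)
The plan is to follow, almost verbatim, the template of the proof of Corollary \ref{cor1}. The two bulleted conditions in the statement are, word for word, conditions $(2)$ and $(3)$ of Theorem \ref{mainresult} specialized to $q = 5$ and to the six $5$-free integers $a_1 = p_1$, $a_2 = p_2$, $a_3 = p_1 p_2$, $a_4 = p_1^2 p_2$, $a_5 = p_1 p_2^2$, $a_6 = p_1^4 p_2$, each of which is $5$-free, differs from $\pm 1$, and has $p_1, p_2$ as its only prime divisors (so $k = 2 \geq 2$ as required). Hence the entire content of the corollary reduces to verifying that these six integers satisfy condition $(1)$ of Theorem \ref{mainresult}, namely that their associated hyperplanes form a linear covering of $\mathbb{F}_5^2$.

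First I would read off the exponent data: writing $a_j = p_1^{\nu_{1j}} p_2^{\nu_{2j}}$ gives $(\nu_{1j})_{j=1}^6 = (1,0,1,2,1,4)$ and $(\nu_{2j})_{j=1}^6 = (0,1,1,1,2,1)$. The associated hyperplanes in $\mathbb{F}_5^2$ are therefore the six lines through the origin $x_1 = 0$, $x_2 = 0$, $x_1 + x_2 = 0$, $2x_1 + x_2 = 0$, $x_1 + 2x_2 = 0$, and $4x_1 + x_2 = 0$.

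The key observation is dimensional. By Proposition \ref{ILC}, $\#\mathrm{LC}(\mathbb{F}_5^2) = q + 1 = 6$, while $\mathbb{F}_5^2$ contains exactly $q + 1 = 6$ one-dimensional subspaces, indexed by the points of $\mathbb{P}^1(\mathbb{F}_5)$. Since every nonzero vector lies on exactly one line through the origin, a family of such lines covers $\mathbb{F}_5^2$ if and only if it consists of all six of them. Thus it suffices to check that the six lines above are pairwise distinct. I would do this by recording each line $\nu_{1j} x_1 + \nu_{2j} x_2 = 0$ as the point $[\nu_{1j} : \nu_{2j}] \in \mathbb{P}^1(\mathbb{F}_5)$ and normalizing: these become $[1:0], [0:1], [1:1], [1:3], [1:2], [1:4]$, which exhaust all six points of $\mathbb{P}^1(\mathbb{F}_5)$. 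For instance $[2:1]$ normalizes to $[1:3]$ since $2^{-1} \equiv 3$, and $[4:1]$ to $[1:4]$ since $4^{-1} \equiv 4$, modulo $5$.

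The only place anything could go wrong is this final distinctness check; the specific exponents appearing in $a_4, a_5, a_6$ were evidently chosen precisely so that the six lines land on six distinct points of $\mathbb{P}^1(\mathbb{F}_5)$, and it is a finite, routine verification rather than a genuine obstacle. Once distinctness is confirmed, condition $(1)$ holds, and invoking Theorem \ref{mainresult} completes the proof exactly as in Corollary \ref{cor1}.
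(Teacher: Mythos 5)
Your proposal is correct and follows the paper's proof essentially verbatim: both reduce the corollary, via Theorem \ref{mainresult}, to checking that the six lines $x_{1}=0$, $x_{2}=0$, $x_{1}+x_{2}=0$, $2x_{1}+x_{2}=0$, $x_{1}+2x_{2}=0$, $4x_{1}+x_{2}=0$ form a linear covering of $\mathbb{F}_{5}^{2}$, with the two bulleted conditions matching conditions $(2)$ and $(3)$ of the theorem. The only difference is in how that finite check is performed: the paper verifies the covering by direct inspection of the points of $\mathbb{F}_{5}^{2}$, whereas you note that the six lines are pairwise distinct (normalizing their coefficient vectors in $\mathbb{P}^{1}(\mathbb{F}_{5})$) and hence exhaust all six one-dimensional subspaces, which partition the nonzero vectors --- a cleaner route to the same conclusion.
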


\begin{proof}
Analogous to that in Corollary \ref{cor1}, the two conditions listed in the statement of the corollary are exactly the conditions $(2)$ and $(3)$ in the statement of Theorem \ref{mainresult}, for $5$-free integers \[a_{1} = p_{1}, a_{2} = p_{2}, a_{3} = p_{1}p_{2}, a_{4} = p_{1}^{2}p_{2}, a_{5} = p_{1}p_{2}^{2} \text{ and } a_{6} = p_{1}^{4}p_{2}.\] Therefore, it suffices to show that $a_{1}, a_{2}, \ldots, a_{6}$ satisfy the condition $(1)$ of Theorem \ref{mainresult}. $p_{1}$ and $p_{2}$ are the only primes dividing $\prod_{j=1}^{6} a_{j}$ and we have,
\begin{multline*}
    \\ \nu_{11} = 1, \nu_{12} = 0, \nu_{13} = 1, \nu_{14} = 2, \nu_{15} = 1, \nu_{16} = 4 \\ \nu_{21} = 0, \nu_{22} = 1, \nu_{23} = 1, \nu_{24} = 1, \nu_{25} = 2, \nu_{26} = 1. \\
\end{multline*}
Therefore, the set of hyperplanes in $\mathbb{F}_{5}^{2}$ associated with $\{a_{1}, a_{2}, \ldots, a_{6}\}$ is \[H_{2}:= \Big\{ x_{1} = 0, x_{2} = 0, x_{1} + x_{2} = 0, 2x_{1} + x_{2} = 0, x_{1} + 2x_{2} = 0 \text{ and } 4x_{1} + x_{2} = 0 \Big\}.\] One can easily see that every $(x_{1}, x_{2}) \in\mathbb{F}_{5}^{2}$ satisfies one of the equation in $H_{2}$. In other words, the set $H_{2}$ forms a linear covering of $\mathbb{F}_{5}^{2}$. The corollary is established.
\end{proof}


\begin{thebibliography}{10}

\bibitem{ArtinTate} Artin, E. and Tate, J. \textit{Class field theory. Reprinted with corrections from the 1967 original}. AMS Chelsea Publishing, RI. 2009.

\bibitem{BLL} Bergelson, V., Leibman, A. and Lesigne, E. (2008). Intersective Polynomials and Polynomial Szemeredi Theorem. \textit{Adv. Math.} 219. 369–388. 

\bibitem{Clark} Clark, L. Pete. (2012). Covering Numbers in Linear Algebra. \textit{Amer. Math. Monthly}. 119(1). 65-67.

\bibitem{FrJar} Fried, M. D. and Jarden, M. \textit{Field Arithmetic}, 2nd Ed. Ergenbnisse der Mathematik und ihrer Grenzgebiete. 3. Folge. A Series of Modern Surveys in Mathematics. vol. 11. Springer-Verlag, Berlin. 2005. 

\bibitem{Fu} Furstenberg, H. (1977). Ergodic Behavior of Diagonal Measures and a Theorem of Szemer\'edi on Arithmetic Progressions. \textit{J. d'Analyse Math}. 71. 204--256. 


\bibitem{HS} Hyde, A. M., Spearman, B. K. (2013). Products of Quadratic Polynomials with Roots Modulo Any Integer. \textit{Int. Math. Forum}. 8(25): 1225--1231. \url{doi.org/10.12988/imf.2013.35112}.

\bibitem{Jones} Jones, G. A., Jones, J. M. (1998). \textit{Elementary Number Theory}. London: Springer-Verlag.

\bibitem{KaMF} Kamae, T. and Mend\'es-France, M. (1978). Van der Corput's Difference Theorem. \textit{Israel J. Math.} 31 (3-4). 335--342. 

\bibitem{MishraAMM} Mishra, B. (2022). Polynomials Consisting of Quadratic Factors with Roots Modulo Any Positive Integer. \textit{Amer. Math. Monthly}. 129(2). 178--182.

\bibitem{MishraIntegers} Mishra, B. (2022). Minimally Intersective Polynomials with Arbitrarily Long Factorization. \textit{Integers} 22. Paper No. A15.


\bibitem{MishraFFA} Mishra, B. (2022). Prime power residues and linear coverings of vector space over $\mathbb{F}_{q}$. \textit{Finite Fields Appl.} 89 (2023). Paper No. 102199. \url{https://doi.org/10.1016/j.ffa.2023.102199}

\bibitem{Sa1} S\'ark\"ozy, A. (1978). On Difference Sets of Sequences of Integers. \textit{Acta Math. Acad. Sci. Hungar.} 31. 125--149.

\bibitem{Sa2} S\'ark\"ozy, A. (1978). On Difference Sets of Sequences of Integers. \textit{Acta Math. Acad. Sci. Hungar.} 31. 355--386.

\end{thebibliography}
\end{document}